\newtheorem{theorem}{Theorem}[section]
\newtheorem{lemma}[theorem]{Lemma}
\newtheorem{corollary}[theorem]{Corollary}
\theoremstyle{definition}
\newtheorem*{remark}{Remarks}
\title{Factorials and Legendre's three-square theorem: II}
\author{Rob Burns}
\begin{document}
\maketitle
\begin{abstract}
Let $\bar{S}$ denote the set of integers $n$ such that $n!$ cannot be written as a sum of three squares. Let $\bar{S}(n)$ denote $\bar{S} \cap [1, n]$. We establish an exact formula for $\bar{S}(2^k)$ and show that $\bar{S}(n) = 1/8*n +  \mathcal{O}(\sqrt{n})$. We also list the lengths of gaps appearing in $\bar{S}$. We make use of the software package Walnut to establish these results.
\end{abstract}

\section{Introduction}
\label{intro}
%\section{}
%\subsection{}

In a previous paper \cite{https://doi.org/10.48550/arxiv.2101.01567}, we constructed an automaton accepting integers $n$ such that $n!$ cannot be written as a sum of three squares. In the current paper we use this automaton and the software package Walnut to explore the sets
$$
S := \{ n: n!  \, \, \text{is a sum of three squares} \},
$$
$$
\bar{S} := \{ n: n!  \, \, \text{is not a sum of three squares} \},
$$
$$
S(n) := \{ x \leq n : x!  \, \, \text{is a sum of three squares} \}.
$$
and
$$
\bar{S}(n) := \{ x \leq n : x!  \, \, \text{is not a sum of three squares} \}.
$$

In particular, we establish a formula for $S(2^k)$, or equivalently, $\bar{S}(2^k)$, and show that $\bar{S}(n) = 1/8*n +  \mathcal{O}(\sqrt{n})$. We also list the lengths of possible gaps in $S$ and its complement $\bar{S}$.

We mention here some similar previous results. In \cite{Deshouillers_2010}, Deshouillers and Luca established the two equivalent bounds
$$
S(n) = 7 n / 8 +  \mathcal{O}(n^{2/3}) \, \, \text{ and } \,\, \bar{S}(n) = x / 8 +  \mathcal{O}(n^{2/3}).
$$
This bound was improved by Hajdu and Papp \cite{Hajda2018} to 
$$
\bar{S}(n) = x / 8 +  \mathcal{O}(n^{1/2} \log^2 n).
$$

Hajdu and Papp \cite{Hajda2018} proved that the largest gap between consecutive elements of $\bar{S}$ is $42$. In theorem \ref{gaptheorem}, we list the lengths of all gaps that occur between consecutive elements of $S$ and $\bar{S}$. 

We make use of the software package Walnut. Hamoon Mousavi, who wrote the program, has provided an introductory article \cite{https://doi.org/10.48550/arxiv.1603.06017}. Papers that have used Walnut include \cite{Mousavi2016DecisionAF}, \cite{https://doi.org/10.48550/arxiv.2103.10904}, \cite{https://doi.org/10.48550/arxiv.2110.06244}, \cite{Go__2013}, \cite{DU2017146}. Further resources related to Walnut can be found at Jeffrey Shallit's page
\begin{verbatim}
https://www.cs.uwaterloo.ca/~shallit/papers.html   .
\end{verbatim}

The free open-source mathematics software system SageMath \cite{sagemath} was essential for the matrix algebra and factoring of polynomials.

In the next section, we describe the automaton that will be used to examine $\bar{S}$ and $\bar{S}(x)$.

\bigskip

\section{The automaton}
\label{automaton}

We first summarise the conditions satisfied by integers in $\bar{S}$. These come from \cite{https://doi.org/10.48550/arxiv.2101.01567} but were previously established by Granville and Zhu in 1990 \cite{10.2307/2323831}.  Let $n \in \mathbb{N}$, with binary representation given by $n = \sum_{k \geq 0} a_k 2^k$, where all but finitely many $a_i$ are zero. Let $\gamma$ be the highest power of $2$ dividing $n!$. For $i \in \{3,5 \}$, define $\alpha_i = \alpha_i(n)$ by

\begin{align}
\label{alpha3}
\alpha_3 = \alpha_3(n) &:= \#\Big\{ k \geq 0: \sum_{i=k}^{k+2} a_i 2^{i-k} \in \{3,4 \}  \Big\} \\
\label{alpha5}
\alpha_5 = \alpha_5(n) &:= \#\Big\{ k \geq 0: \sum_{i=k}^{k+2} a_i 2^{i-k} \in \{5,6 \} \Big\} .
\end{align}

\bigskip

\begin{theorem}
\label{mainthm}
Let $n \in \mathbb{N}$ have binary representation $n = \sum_{k \geq 0} a_k 2^k$, where all but finitely many $a_i$ are zero. If $\gamma$ is the highest power of $2$ dividing $n!$, then $n! = 2^{\gamma}Z$, where $Z$ satisfies 
\begin{align*}
Z \equiv 3^{\alpha_3(n)}(-1)^{\alpha_5(n)} \pmod{8}
\end{align*}
\end{theorem}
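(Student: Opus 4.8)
The plan is to strip the powers of $2$ out of $n!$ one binary digit at a time and keep track of the remaining odd part modulo $8$. The starting point is the elementary factorization obtained by separating the even and the odd factors of $n!$. Writing $O(m):=\prod_{\substack{j\le m\\ j\ \text{odd}}} j$ for the product of the odd positive integers up to $m$, I would use
\[
n! \;=\; \Bigl(\prod_{1\le 2i\le n} 2i\Bigr)\Bigl(\prod_{\substack{j\le n\\ j\ \text{odd}}} j\Bigr) \;=\; 2^{\lfloor n/2\rfloor}\,\lfloor n/2\rfloor!\cdot O(n),
\]
and then iterate this identity on $\lfloor n/2\rfloor!$, then on $\lfloor n/4\rfloor!$, and so on until the argument becomes $0$. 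This gives
\[
n! \;=\; 2^{\gamma}\prod_{k\ge 0} O\!\bigl(\lfloor n/2^k\rfloor\bigr), \qquad \gamma=\sum_{k\ge 1}\lfloor n/2^k\rfloor ,
\]
the exponent of $2$ being exactly what the recursion $v_2(n!)=\lfloor n/2\rfloor+v_2(\lfloor n/2\rfloor!)$ produces (Legendre's formula). Since each factor $O(\lfloor n/2^k\rfloor)$ is odd and all but finitely many equal $O(0)=1$, this exhibits the odd part of $n!$ as $Z=\prod_{k\ge 0}O\!\bigl(\lfloor n/2^k\rfloor\bigr)$.

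The next step is to compute $O(m)\bmod 8$. Read modulo $8$, the odd integers are $4$-periodic, $1,3,5,7,1,3,5,7,\dots$, and one full block multiplies to $1\cdot3\cdot5\cdot7=105\equiv 1\pmod 8$. Writing $m=8q+r$ with $0\le r<8$, the $q$ complete blocks contribute a factor $1$, and the leftover odd factors $8q+1,8q+3,\dots\le 8q+r$ contribute, modulo $8$, the partial product $1$, $1\cdot 3$, or $1\cdot 3\cdot 5$ depending only on $r$. A short case check then yields
\[
O(m)\equiv
\begin{cases}
3 & \text{if } m\equiv 3,4\pmod 8,\\
-1 & \text{if } m\equiv 5,6\pmod 8,\\
1 & \text{otherwise,}
\end{cases}
\]
so that $O(m)\bmod 8$ depends only on $m\bmod 8$.

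Finally I would translate this back to the binary digits of $n$. For each $k\ge 0$ one has $\lfloor n/2^k\rfloor\equiv a_k+2a_{k+1}+4a_{k+2}=\sum_{i=k}^{k+2}a_i2^{i-k}\pmod 8$. Comparing with the residue table above, the factor $O\!\bigl(\lfloor n/2^k\rfloor\bigr)$ equals $3$ in $\mathbb{Z}/8\mathbb{Z}$ precisely when $\sum_{i=k}^{k+2}a_i2^{i-k}\in\{3,4\}$, i.e.\ precisely at the indices $k$ counted by $\alpha_3(n)$ in \eqref{alpha3}; it equals $-1$ precisely at the indices counted by $\alpha_5(n)$ in \eqref{alpha5}; and it equals $1$ at every other index. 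Multiplying over all $k$ gives $Z\equiv 3^{\alpha_3(n)}(-1)^{\alpha_5(n)}\pmod 8$, which is the claim. There is no deep obstacle in this route; the only point requiring genuine care is pinning down the periodic product $O(m)\bmod 8$ correctly and confirming it behaves as stated for the small values $m\in\{0,1,2,3\}$ that occur among the leading digits of $n$ — which it does, since $O(0)=O(1)=O(2)=1$ and $O(3)=3$.
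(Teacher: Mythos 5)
Your proof is correct and complete. Note that this paper does not actually prove Theorem \ref{mainthm} --- it only cites it (from the author's earlier paper and from Granville--Zhu) --- so there is no in-paper argument to compare against; the route you take, splitting $n!$ into $2^{\lfloor n/2\rfloor}\lfloor n/2\rfloor!$ times the odd-block product $O(n)$, iterating to get $Z=\prod_{k\ge 0}O(\lfloor n/2^k\rfloor)$ with $\gamma$ given by Legendre's formula, and then reading each factor modulo $8$ off the three binary digits $a_k,a_{k+1},a_{k+2}$, is precisely the standard argument behind the cited result, and your residue table for $O(m)\bmod 8$ checks out.
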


\bigskip

\begin{corollary}
If $n \in \mathbb{N}$, then $n!$ cannot be written as a sum of three squares if and only if $\gamma$ and $\alpha_3$ are even and $\alpha_5$ is odd.
\end{corollary}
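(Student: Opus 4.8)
The plan is to derive the corollary from Theorem \ref{mainthm} together with Legendre's three-square theorem, which states that a positive integer $m$ fails to be a sum of three squares precisely when $m = 4^{a}(8b+7)$ for some integers $a,b \geq 0$. Thus everything reduces to recognising when $n! = 2^{\gamma}Z$ (with $Z$ odd) has this shape, expressed through $\gamma$, $\alpha_3(n)$ and $\alpha_5(n)$.

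First I would record the following elementary normalisation: an integer $m$ is of the form $4^{a}(8b+7)$ if and only if its $2$-adic valuation $v_2(m)$ is even, say $v_2(m) = 2a$, and its odd part $m/2^{v_2(m)}$ satisfies $m/2^{v_2(m)} \equiv 7 \pmod 8$. Indeed, if $v_2(m)$ is odd then for the only candidate exponent $a = \lfloor v_2(m)/2 \rfloor$ the quotient $m/4^{a}$ is twice an odd number, hence $\equiv 2$ or $6 \pmod 8$ and never $\equiv 7$; and for any smaller exponent $a' < \lfloor v_2(m)/2 \rfloor$ the quotient $m/4^{a'}$ is even, again not $\equiv 7 \pmod 8$. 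Applying this with $m = n! = 2^{\gamma}Z$ yields: $n!$ is not a sum of three squares if and only if $\gamma$ is even and $Z \equiv 7 \pmod 8$.

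Next I would substitute the congruence supplied by Theorem \ref{mainthm}, namely $Z \equiv 3^{\alpha_3(n)}(-1)^{\alpha_5(n)} \pmod 8$, and enumerate the four parity cases for the pair $(\alpha_3,\alpha_5)$. Since $3^2 \equiv 1 \pmod 8$, we have $3^{\alpha_3} \equiv 1$ or $3 \pmod 8$ according as $\alpha_3$ is even or odd, while $(-1)^{\alpha_5} \equiv 1$ or $-1 \pmod 8$; multiplying out gives the residues $1,\ 7,\ 3,\ 5 \pmod 8$ in the cases (even, even), (even, odd), (odd, even), (odd, odd) respectively. Hence $Z \equiv 7 \pmod 8$ exactly when $\alpha_3$ is even and $\alpha_5$ is odd. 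Combining this with the requirement that $\gamma$ be even produces the stated equivalence.

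The only genuine content is the first reduction — making the parity of $\gamma$ interact correctly with the factor $4^{a}$ in Legendre's criterion — and after that the argument is a short case check, so I do not anticipate a real obstacle. The one point worth a remark is the degenerate case $n \in \{0,1\}$, where $n! = 1$, $\gamma = 0$ is even, $\alpha_3 = \alpha_5 = 0$, and $1$ is indeed a sum of three squares, consistent with the statement.
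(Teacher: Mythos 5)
Your proposal is correct and follows exactly the intended derivation: the paper states this corollary without proof (citing its predecessor and Granville--Zhu), and the argument is precisely to combine Theorem \ref{mainthm} with Legendre's three-square criterion $m = 4^{a}(8b+7)$, reducing to the parity of $\gamma$ and the residue of $Z$ modulo $8$. Your careful handling of the normalisation (even $2$-adic valuation plus odd part $\equiv 7 \pmod 8$) and the four-case parity check are both sound and complete.
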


\bigskip

According to the above two results, whether $n!$ can be written as a sum of three squares is determined by the triple $( \gamma \pmod{2}, \alpha_3 \pmod{2}, \alpha_5 \pmod{2})$. For convenience, we will give this triple a name. For integers $n$, define $\Theta (n)$ by
$$
\Theta (n) :=  ( \, \, \gamma \pmod{2}, \, \, \alpha_3 \pmod{2}, \, \, \alpha_5 \pmod{2} \, \,).
$$
The components of $\Theta (n)$ are 
\begin{equation}
\label{thetacomp}
\Theta_1(n) :=  \gamma \pmod{2}, \,\,\, \Theta_2 (n) := \alpha_3 \pmod{2}, \,\,\, \Theta_3 (n) := \alpha_5 \pmod{2} .
\end{equation}
Then, $n \in \bar{S}$ if and only if $\Theta (n) = (0, 0, 1)$.

We present here three automata which take as input the binary digits of an integer $n$, starting with the least significant digit. The automata were produced by Walnut based on the instructions provided in \cite{https://doi.org/10.48550/arxiv.2101.01567}. The first automaton, which we call gamma, is pictured in figure \ref{autog}. It accepts integers $n$ if and only if $\gamma(n)$ is even. The second automaton, which we call a3, is pictured in figure \ref{autoa3}. It accepts integers $n$ if and only if $\alpha_3(n)$ is even. The third automaton, which we call a5, is pictured in figure \ref{autoa5}. It accepts integers $n$ if and only if the $\alpha5(n)$ is even. 

\begin{figure}
\centering
\includegraphics[width=9cm]{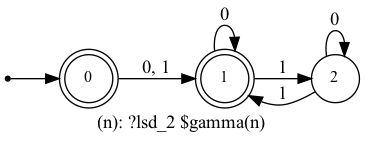}
\caption{Automaton for the parity of $\gamma$}
\label{autog}
\end{figure}

\begin{figure}
\centering
\includegraphics[width=11cm]{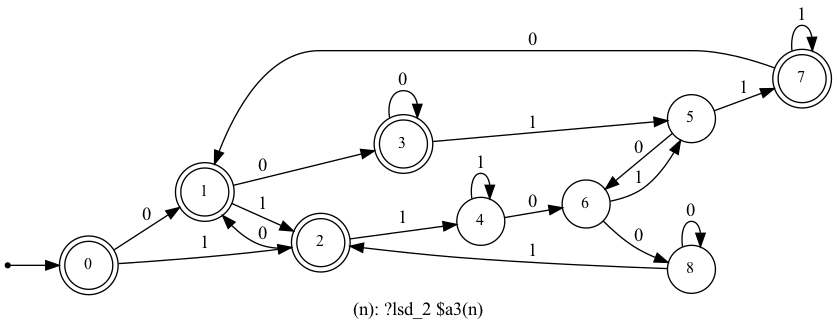}
\caption{Automaton for the parity of $\alpha_3$}
\label{autoa3}
\end{figure}

\begin{figure}
\centering
\includegraphics[width=11cm]{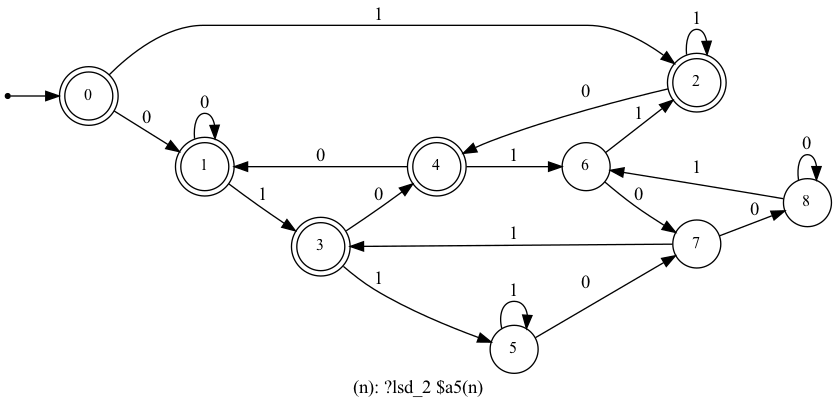}
\caption{Automaton for the parity of $\alpha_5$}
\label{autoa5}
\end{figure}

For any triple of binary digits $(x, y, z)$, we can use Walnut to create an automaton which accepts $n$ if and only if $\Theta(n) = (x, y, z)$. We are particularly interested in determining when $\Theta(n) = (0, 0, 1)$. We create this automaton using the Walnut command:

\begin{verbatim}
def factauto "?lsd_2 $gamma(n) & $a3(n) & ~$a5(n)":
\end{verbatim}

The resulting automaton has 33 states and is pictured in figure \ref{factauto}. 

\begin{figure}
\centering
\includegraphics[width=13cm]{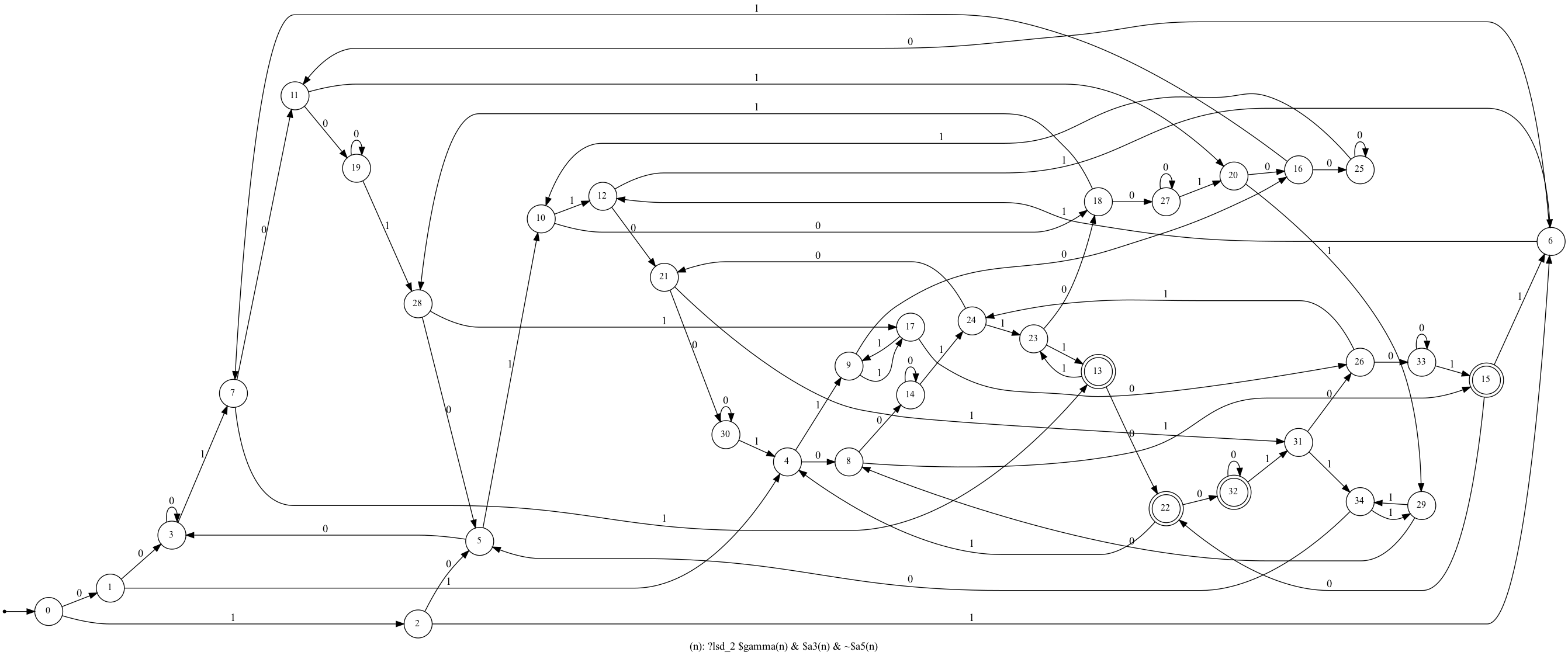}
\caption{Automaton accepting $n$ if $n!$ cannot be written as a sum of $3$ squares}
\label{factauto}
\end{figure}

\bigskip

\section{Gaps and runs in $S$ and $\bar{S}$}
\label{gaps}

In this section we investigate the gaps and runs in the sets $S$ and $\bar{S}$. A gap $g$ exists in $S$ (or $\bar{S}$) if there are integers $x$ and $y$ which are consecutive elements of $S$ and such that $y - x = g$. A run of length $r$ exists in $S$ (or $\bar{S}$) if $S$ contains $x, x + 1, \dots , x + r -1$ but $x - 1$ and $x+r$ are not in $S$. Since $S \cup \bar{S} =  \mathbb{N}$, gaps in $S$ correspond to runs in $\bar{S}$ and vice versa. For example, if $S$ contains a run of size $r$, then $\bar{S}$ contains a gap of length $r+1$.

In  \cite{Hajda2018}, Hajdu and Papp proved that the largest gap between consecutive elements of $\bar{S}$ is $42$. 

\bigskip

\begin{theorem}
\label{gaptheorem}
The gap sizes in $S$ are $\{ 1, 2, 3, 4 \}$. The gap sizes in $\bar{S}$ are
$$
\{ 1, 2, 3, \dots , 23, 25, 26, 28, 30, 31, 33, 34, 35, 37, 38, 42 \}.
$$
\end{theorem}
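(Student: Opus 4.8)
The plan is to turn the determination of all gap sizes into a single finite computation in Walnut, using the automaton \texttt{factauto} of Figure \ref{factauto}, which recognizes membership in $\bar{S}$ (so its complement recognizes $S$). The key observation is that ``$x$ and $y$ are consecutive elements of $\bar{S}$ with $y-x=g$'' is a first-order statement over $\langle \mathbb{N};+,<,\bar{S}\rangle$: it asserts $g\geq 1$, $x\in\bar{S}$, $x+g\in\bar{S}$, and $\forall z\,(x<z<x+g \Rightarrow z\notin\bar{S})$. Since $\bar{S}$ is accepted by \texttt{factauto} in the lsd base-$2$ numeration system, any such formula is decidable by Walnut; and, crucially, existentially quantifying out the left endpoint $x$ leaves an automaton in the single free variable $g$ whose accepted set is \emph{exactly} the set of gap sizes occurring in $\bar{S}$. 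The same works for $S$ by negating the calls to \texttt{factauto}.

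Concretely, I would issue commands of (roughly) the form
\begin{verbatim}
def gapbarS "?lsd_2 (g>=1) & (Ex ($factauto(x) & $factauto(x+g)
  & (Az ((z>x & z<x+g) => ~$factauto(z)))))":
def gapS "?lsd_2 (g>=1) & (Ex (~$factauto(x) & ~$factauto(x+g)
  & (Az ((z>x & z<x+g) => $factauto(z)))))":
\end{verbatim}
and then read off the (finite) languages accepted by \texttt{gapbarS} and \texttt{gapS}. Comparing these two lists against $\{1,2,3,4\}$ and $\{1,\dots,23,25,26,28,30,31,33,34,35,37,38,42\}$ respectively proves the theorem. A built-in consistency check is available from the run/gap correspondence noted in the text: a gap of size $g\geq 2$ in $\bar{S}$ forces a run of length $g-1$ in $S$, and vice versa, so the two outputs must match up accordingly; in particular the maximum entry of \texttt{gapbarS} should be one more than the maximum run length of $S$, and conversely.

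To make the ``read off a finite list'' step rigorous one must know the languages are finite. For $\bar{S}$ this is immediate from the Hajdu--Papp bound that the largest gap in $\bar{S}$ is $42$; for $S$ one can have Walnut confirm it directly, e.g.\ by intersecting \texttt{gapS} with the automaton for $g>4$ (resp.\ \texttt{gapbarS} with $g>42$) and verifying the result is empty. I expect the main obstacle to be bookkeeping rather than mathematics: getting the lsd base-$2$ quantifier syntax exactly right, in particular making sure the ``nothing strictly between'' clause is not vacuously satisfied and that the boundary indices $z=x$ and $z=x+g$ are handled correctly, and then separately certifying (via the emptiness checks above) that the gap automata capture \emph{all} gaps and not merely some. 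Once the commands compile and Walnut returns the two automata, extracting the explicit lists and comparing with the claimed sets is routine.
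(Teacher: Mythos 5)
Your proposal is correct and follows essentially the same route as the paper: define a two-variable Walnut predicate expressing ``$n\in\bar{S}$ (resp.\ $S$), $n+r$ is the next element, nothing in between,'' existentially quantify out the left endpoint, and read the finite accepted language off the resulting one-variable automaton built from \texttt{factauto}. Your added finiteness/consistency checks are sensible refinements, but the underlying argument is identical to the paper's.
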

\begin{proof}
We firstly deal with $\bar{S}$. Recall the automaton "factauto" which was defined previously. It accepts integers $n$ such that $n!$ cannot be written as a sum of three squares. We use Walnut to define an automaton, which we call "gaps":

\begin{lstlisting}[breaklines]
def gaps "?lsd_2 $factauto(n) & $factauto(n+r) &  (Aj  (j < r-1) => ~$factauto(n+ j + 1))":
\end{lstlisting}

This automaton takes as input the integer pair $(n, r)$ and reaches an accepting state if and only if $n \in \bar{S}$ and the next integer in $\bar{S}$ is $n+r$, i.e. there a gap of length $r$ in $\bar{S}$ starting at $n$. Since the automaton has 319 states, we will not display it here. The following Walnut command creates an automaton which accepts an integer $r$ if and only if there is a gap of length $r$ somewhere in $\bar{S}$:
\begin{verbatim}
eval tmp "?lsd_2 E n $gaps(n, r)":
\end{verbatim}
This automaton is displayed in figure \ref{sbargaps}. A patient reader will see that the automaton accepts the integers listed in the theorem.

We can use the same approach to find all gaps in the set $S$. We define the automaton "sgaps" which takes as input the integer pair $(n, r)$ and reaches an accepting state if and only if $n \in S$ and the next integer in $S$ is $n+r$, i.e. there a gap of length $r$ in $S$ starting at $n$. 
\begin{lstlisting}[breaklines]
def sgaps "?lsd_2 ~$factauto(n) & ~$factauto(n+r) &  (Aj  (j < r-1) => $factauto(n+ j + 1))":
\end{lstlisting}
The resulting automaton has 203 states. The following Walnut command creates an automaton which accepts an integer $r$ if and only if there is a gap of length $r$ somewhere in $S$:
\begin{verbatim}
eval tmp "?lsd_2 E n $sgaps(n, r)":
\end{verbatim}
This automaton is displayed in figure \ref{sgaps}. It is clear that it only accepts the integers $1, 2, 3, 4$.
\end{proof}

\bigskip

\begin{remark}
A gap of 42 in $\bar{S}$ commences at $n = 23268$ as mentioned by Hajdu and Papp. This is the second last gap to appear in the sequence $\bar{S}$. The last gap to appear in $\bar{S}$ is $33$, which commences at  $n = 153828 = (100101100011100100)_2$.

Theorem \ref{gaptheorem} is about gaps in the set $\bar{S}$ which contains integers $n$ satisfying $\Theta(n) = (0, 0, 1)$. The same method can be used for other sets defined in terms of $\Theta$ (see (\ref{sxyzdef}) below). Hajdu and Papp showed that the maximum gap length in any of these sets is $42$.
\end{remark}

\begin{figure}
\centering
\includegraphics[width=13cm]{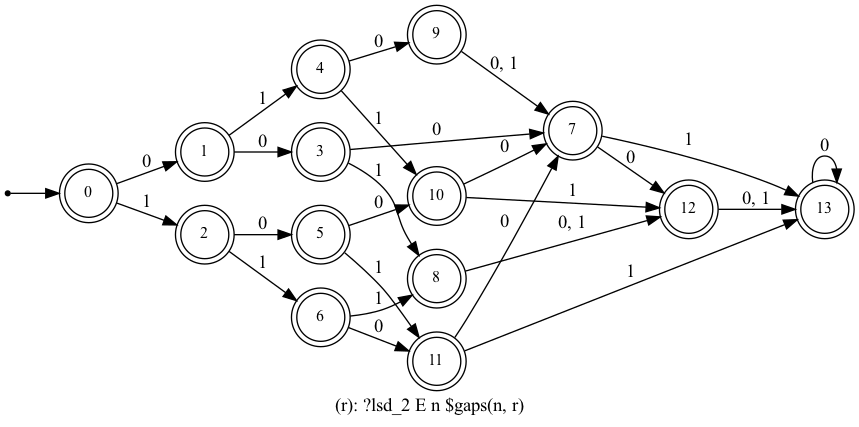}
\caption{Automaton accepting gap lengths in $\bar{S}$}
\label{sbargaps}
\end{figure}

\begin{figure}
\centering
\includegraphics[width=13cm]{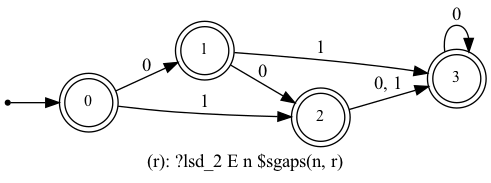}
\caption{Automaton accepting gap lengths in $S$}
\label{sgaps}
\end{figure}

\bigskip

\section{A formula for $\bar{S}(2^k)$}
\label{S2k}
In this section we establish a formula for $\bar{S}(2^k)$, The method we use is taken from the papers \cite{Du_2016}, \cite{https://doi.org/10.48550/arxiv.2006.04177}, \cite{https://doi.org/10.48550/arxiv.2203.02917}. Firstly, we use Walnut to obtain a linear representation of $\bar{S}(n)$. Walnut produces matrices $M_0$ and $M_1$ and vectors $v$ and $w$ such that, if $n$ has binary representation $(n_0, n_1, \dots , n_k )$ where $n_0$ is the least significant digit, then
\begin{equation}
\label{linrep}
\bar{S}(n) = v*M_{n_0}*M_{n_1}* \dots *M_{n_k}*w
\end{equation}

In the particular case that $n = 2^k$, we have $\bar{S}(2^k) = v*M_0 ^k * M_1 * w$. The theory of linear recurrences says that there are constants $\{ c_i \}$ such that $\bar{S}(2^k)$ can be written as 
\begin{equation}
\label{s2kequn}
\bar{S}(2^k) = \sum_i c_i \lambda_i^k
\end{equation}
where $\{\lambda_i \}$ are the roots of the minimal polynomial of $M_0$. The constants $\{c_i \}$ can be determined by calculating $\bar{S}(2^k)$ for enough values of $k$ and plugging the values into (\ref{s2kequn}) to obtain a system of linear equations that can be solved.

\bigskip

\begin{theorem}
\label{s2kthm}
For $k \geq 2$,
$$
\bar{S}(2^k) =
\begin{cases}
   2^{k-3} - 2^{(k-4)/2}, & \text{if } \, \, k \equiv \{0, 2, 22 \} \pmod{24}\\
   2^{k-3} - 2^{(k-3)/2}, & \text{if } \, \, k \equiv \{1, 3 \} \pmod{24}\\
   2^{k-3} - 2^{(k-5)/2}, & \text{if } \, \, k \equiv 11 \pmod{24}\\    
   2^{k-3} + 2^{(k-4)/2}, & \text{if } \, \, k \equiv \{14, 16, 18 \} \pmod{24}\\
   2^{k-3} + 2^{(k-3)/2}, & \text{if } \, \, k \equiv 17 \pmod{24}\\
   2^{k-3} - 3*2^{(k-5)/2}, & \text{if } \, \, k \equiv 23 \pmod{24} \\
   2^{k-3}                      & \text{otherwise }
\end{cases}
$$
\end{theorem}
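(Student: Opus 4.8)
The plan is to carry out in full the linear--recurrence argument sketched just before the statement, making each step effective.

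First I would feed the automaton \texttt{factauto} of Section~\ref{automaton} into Walnut's counting facility to obtain an explicit linear representation $(v, M_0, M_1, w)$ of the function $n \mapsto \bar{S}(n)$, so that $\bar{S}(n) = v\,M_{n_0} M_{n_1}\cdots M_{n_k}\, w$ whenever $(n_0, n_1, \dots, n_k)$ is the base-$2$ expansion of $n$ read least-significant digit first. After minimising this representation over $\mathbb{Q}$ to keep the matrices as small as possible, I specialise to $n = 2^k$, whose lsd-first binary word is $0^k 1$; this gives $\bar{S}(2^k) = v\, M_0^{\,k} M_1\, w =: a_k$, matching formula~(\ref{linrep}).

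Next I would compute in SageMath the minimal polynomial $m(x)$ of $M_0$ and factor it. Since $m(M_0)=0$, the sequence $(a_k)$ satisfies the linear recurrence with characteristic polynomial $m(x)$; writing $m(x) = x^e \prod_i (x-\lambda_i)^{\mu_i}$ with the $\lambda_i \neq 0$ distinct, one gets $a_k = \sum_i P_i(k)\,\lambda_i^k$ for all $k \geq e$, with $\deg P_i < \mu_i$. I expect the nonzero eigenvalues to be simple, to include $\lambda = 2$, and otherwise to be numbers of modulus $\sqrt{2}$ of the form $\sqrt{2}\,\zeta$ with $\zeta$ a root of unity whose orders have least common multiple $24$; this is exactly what will produce the period-$24$ behaviour and the half-integer exponents $(k-3)/2$, $(k-4)/2$, $(k-5)/2$ in the statement. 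The factor $x^e$ affects only finitely many initial terms, which is why the formula is asserted only for $k \geq 2$.

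Then I would pin down the constants: compute $a_0, a_1, \dots$ directly from the matrix products for enough values of $k$ (past $e$ and past $24$), substitute into (\ref{s2kequn}), and solve the resulting linear system over $\mathbb{Q}(\zeta_{24})$ in Sage. The coefficient attached to $\lambda = 2$ should come out to $1/8$, giving the main term $2^{k-3}$, while the remaining modulus-$\sqrt{2}$ terms should combine, for each residue $r = k \bmod 24$, into exactly one of $0$, $\pm 2^{(k-3)/2}$, $\pm 2^{(k-4)/2}$, $-2^{(k-5)/2}$, $-3\cdot 2^{(k-5)/2}$; evaluating this correction for each of the $24$ residues (an exact cyclotomic-field computation) yields the seven cases. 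Equivalently, one may take the right-hand side of the theorem as a candidate $g(k)$, verify that each of its branches is annihilated by $m(x)$ (i.e.\ that $m(x)$ carries the expected cyclotomic factors), check $g(k)=a_k$ on a sufficiently long window of consecutive $k$, and invoke uniqueness of solutions of the recurrence, together with a separate hand check of the smallest cases $k = 2, 3, \dots$, to conclude $\bar{S}(2^k) = g(k)$ for all $k \geq 2$. I expect the main obstacle to be precisely this final simplification: one must be certain the bulky sum $\sum_i c_i \lambda_i^k$ of algebraic numbers of absolute value $\sqrt{2}$ really does collapse to the clean powers of $2$ listed, in every one of the $24$ residue classes — a finite but error-prone calculation that has to be done with exact arithmetic rather than numerically, and that depends on having correctly identified $m(x)$, its factorisation, and the nilpotent index $e$ (an error in $e$ would corrupt the claimed range $k \geq 2$, so the explicit verification of small cases is not optional).
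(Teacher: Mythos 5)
Your proposal is correct and follows essentially the same route as the paper: Walnut's linear representation, the minimal polynomial of $M_0$ with eigenvalues $2$, $1$, and $\sqrt{2}$ times $24$th roots of unity, and a period-$24$ case analysis pinned down by exact computation of enough initial values. The only cosmetic difference is that the paper avoids solving the full system over $\mathbb{Q}(\zeta_{24})$ by determining just $c_0=1/8$ and $c_1=0$ and then reading off each residue class's correction as $\bar{S}(2^s)-2^{s-3}$ from the directly computed values $\bar{S}(2^s)$, $0\le s<24$; your explicit insistence on checking the small cases against the nilpotent factor $x^4$ is a point the paper glosses over.
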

\begin{proof}
The Walnut command
\begin{verbatim}
eval sumfact n "?lsd_2 (j>=1) & (j<=n) & $factauto(j)":
\end{verbatim}

produces the matrices $M_0$ and $M_1$ and the vectors $v$ and $w$ mentioned above. Since the dimensions of these objects are large, we will not display them here. The minimal polynomial of $M_0$ is
\begin{equation*}
\begin{split}
h&(x) =  \, \, x^{20} - 7*x^{19} + 20*x^{18} - 30*x^{17} + 24*x^{16} - 40*x^{14} + 64*x^{13} - 128*x^{11} \\ & + 160*x^{10} - 384*x^8 + 960*x^7 - 1280*x^6 + 896*x^5 - 256*x^4 \\
&= \, \, (x - 2) * (x - 1) * (x - i - 1) * (x + i - 1) * (x - i\sqrt{2}) * (x + i\sqrt{2}) * \\
& *(x + \sqrt{2}) * (x + (-i/2 + 1/2)*\sqrt{3} + i/2 - 1/2) * \\ 
& * (x - 1/2*\sqrt{2}*\sqrt{3} + 1/2*\sqrt{2}) * (x + 1/2*\sqrt{2}*\sqrt{3} + 1/2*\sqrt{2}) * \\
& * (x + (i/2 + 1/2)*\sqrt{3} - i/2 - 1/2) * (x + (-i/2 - 1/2)*\sqrt{3} - i/2 - 1/2) * \\
& * (x - 1/2*\sqrt{2}*\sqrt{3} - 1/2*\sqrt{2}) * (x + 1/2*\sqrt{2}*\sqrt{3} - 1/2*\sqrt{2}) * \\
& * (x + (i/2 - 1/2)*\sqrt{3} + i/2 - 1/2) * (x - \sqrt{2}) * x^4
\end{split}
\end{equation*}

All roots of $h$, apart from $2$ and $1$, are of the form $\lambda_i = \sqrt{2} * (\text{a 24'th root of unity})$.  We then have for $0 \leq s < 24$,
\begin{equation}
\label{s24ks}
\begin{split}
\bar{S}(2^{24k + s}) =& \sum_j c_j \lambda_j^{24k + s} =  c_0*2^{24k + s} + c_1 + \sum_{\lambda_j \not  1, 2 }  c_j \lambda_j^{24k + s} \\
= &c_0*2^{24k + s} + c_1 + 2^{12k} *  \sum_{\lambda_j \not = 1, 2 } c_j \lambda_j^{s}
\end{split}
 \end{equation}
The values of the $ \{c_j\}$ can be determined by calculating $\bar{S}(2^k)$ for enough $k$ using (\ref{linrep}). We create a matrix $A$ having the same dimension as the number of constants. The $j$'th row of $A$ is
$$
2^{j+2}, 1, \lambda_2^{j+2}, \lambda_3^{j+2}, \dots
$$
Then the vector of constants is given by
$$
(c_0, c_1, \dots) = A^{-1}*(\bar{S}(2^2), \bar{S}(2^3), \dots ) .
$$
The calculation gives $c_0 = 1/8$ and $c_1 = 0$.  Since we are only interested in the values for $c_0$ and $c_1$, an alternative approach is to use (\ref{s24ks}) and the calculated values of, say, $\bar{S}(2^3) = 0, \bar{S}(2^{27}) = 16773120$ and $\bar{S}(2^{51}) = 281474959933440$ to create three linear equations. The equations are:
\begin{equation*}
\begin{split}
2^3 * c_0 + c_1 + \tau =& 0 \\
2^{27} * c_0 + c_1 + 2^{12} * \tau =&16773120 \\
2^{51} * c_0 + c_1 + 2^{24} * \tau =&281474959933440
\end{split}
\end{equation*}
where $\tau =  \sum_{\lambda_j \not = 1, 2 } c_j \lambda_j^{3}$. These can be solved to give $c_0 = 1/8$, $c_1 = 0$ and $\tau = -1$.

Plugging the values for $c_0$ and $c_1$ into (\ref{s24ks}), we see that, for each $0 \leq s < 24$, the terms $\sum_{\lambda_j \not = 1, 2 } c_j \lambda_j^{s}$ satisfy 
$$
\sum_{\lambda_j \not = 1, 2 } c_j \lambda_j^{s} = \bar{S}(2^s)  - 2^{s-3}
$$

As an example of the calculation that is required for each $s$, $\bar{S}(2^3) = 0$ so $\sum_{\lambda_j \not = 1, 2 } c_j \lambda_j^{3} = -1$. Hence, if $n \equiv 3 \pmod{24}$, then $\bar{S}(2^n) = 2^{n-3} - 2^{12*\lfloor n/24 \rfloor} = 2^{n-3} - 2^{(n-3)/2}$. Here $\lfloor x \rfloor$ is the floor function. A similar calculation can be made for each $s$.
\end{proof}

\bigskip

\begin{theorem}
For $k \geq 5$,
$$
\bar{S}(3*2^k) =
\begin{cases}
   3*2^{k-3} +1, & \text{if } \, \, k \equiv \{ 7, 9, 13, 18, 19 \} \pmod{24}\\
   3*2^{k-3} + 1- 2^{(k-3)/2}, & \text{if } \, \, k \equiv \{ 1, 3, 5 \} \pmod{24}\\
   3*2^{k-3} + 1- 2^{(k-2)/2}, & \text{if } \, \, k \equiv \{ 0, 4, 10 \} \pmod{24}\\
   3*2^{k-3} + 1 - 2^{(k-4)/2}, & \text{if } \, \, k \equiv \{ 2, 6, 8, 12 \} \pmod{24}\\
   3*2^{k-3} + 1 - 2^{(k-5)/2}, & \text{if } \, \, k \equiv \{ 11, 23 \}  \pmod{24}\\
   3*2^{k-3} + 1 + 2^{(k-4)/2}, & \text{if } \, \, k \equiv 14 \pmod{24} \\
   3*2^{k-3} + 1 + 2^{(k-1)/2}, & \text{if } \, \, k \equiv 15 \pmod{24}\\
   3*2^{k-3} + 1 + 3*2^{(k-4)/2}, & \text{if } \, \, k \equiv 16 \pmod{24}\\
   3*2^{k-3} + 1 + 2^{(k-3)/2}, & \text{if } \, \, k \equiv 17 \pmod{24}\\
   3*2^{k-3} + 1 - 3*2^{(k-4)/2}, & \text{if } \, \, k \equiv \{ 20, 22 \}  \pmod{24}\\
   3*2^{k-3} + 1 - 2^{(k-1)/2}, & \text{if } \, \, k \equiv 21 \pmod{24}
\end{cases}
$$
\end{theorem}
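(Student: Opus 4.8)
The plan is to run the argument from the proof of Theorem~\ref{s2kthm} a second time, replacing the terminal factor $M_1 w$ in (\ref{linrep}) by $M_1^2 w$. Since $3\cdot 2^k = 2^k + 2^{k+1}$, the base-$2$ representation of $3\cdot 2^k$, read least-significant-digit first, is $(0,0,\dots,0,1,1)$: $k$ zeros followed by two ones. Hence, by (\ref{linrep}),
$$
\bar{S}(3\cdot 2^k) \;=\; v * M_0^k * M_1 * M_1 * w \;=\; v * M_0^k * u, \qquad u := M_1^2 w ,
$$
so the sequence $k \mapsto \bar{S}(3\cdot 2^k)$ satisfies the linear recurrence whose characteristic polynomial is the minimal polynomial $h(x)$ of $M_0$ from Theorem~\ref{s2kthm}. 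Recall that, apart from $2$, $1$ and $0$ (the last with multiplicity four), every root of $h$ is $\sqrt2$ times a $24$th root of unity, and hence satisfies $\lambda^{24} = 2^{12}$. Consequently there are constants $c_j'$ and a threshold $k_0$ such that, for every $k = 24m + s \ge k_0$ with $0 \le s < 24$,
$$
\bar{S}(3\cdot 2^{k}) \;=\; c_0' \, 2^{k} + c_1' + 2^{12m} F_s, \qquad F_s := \sum_{\lambda_j \neq 1,2} c_j' \lambda_j^{s} .
$$

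Next I would pin down the constants exactly as before: evaluating (\ref{linrep}) in SageMath for enough $k$ and solving the resulting linear system yields $c_0' = 3/8$ and $c_1' = 1$, matching the main term $3\cdot 2^{k-3}+1$; then, for each residue $s$,
$$
F_s \;=\; 2^{-12m}\bigl(\bar{S}(3\cdot 2^{24m+s}) - 3\cdot 2^{24m+s-3} - 1\bigr) ,
$$
a number I would read off from one convenient value of $m$. Since $2^{12m} = 2^{(k-s)/2}$ when $k = 24m+s$, the correction term equals $2^{(k-s)/2}F_s$; tabulating the twenty-four numbers $F_s$ and collecting the powers of two then reproduces exactly the eleven cases of the statement, with the five residues $s \in \{7,9,13,18,19\}$ being precisely those for which $F_s = 0$. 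Finally I would verify directly that $k_0 \le 5$, so the closed form holds for all $k \ge 5$; the hypothesis $k \ge 5$ also makes every exponent $(k-1)/2,\,(k-2)/2,\,(k-3)/2,\,(k-4)/2,\,(k-5)/2,\,k-3$ that appears in the cases a nonnegative integer, so the finitely many values $k \le 4$ --- some of which do not even admit an integer-valued closed form of this shape --- are genuinely exceptional.

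I do not expect a real obstruction, since the computation duplicates that of Theorem~\ref{s2kthm}; the main difficulty is the amount of essentially routine bookkeeping, and the place where an error is most likely to creep in is pinning down the threshold $k_0$ --- which means computing the minimal recurrence actually satisfied by $k \mapsto \bar{S}(3\cdot 2^k)$ (a divisor of $h$) and checking the first several values --- together with getting the digit order right so that $u = M_1^2 w$, and not $M_1 w$ or $M_1^3 w$, is the correct terminal factor. One should also note that some of the $F_s$, for instance $F_1 = -\tfrac12$, are not integers; this is harmless, since only the products $2^{12m}F_s$ need be. As in Theorem~\ref{s2kthm}, Walnut produces the linear representation and SageMath carries out the matrix algebra and the factoring.
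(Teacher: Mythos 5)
Your proposal follows the paper's proof essentially verbatim: the paper likewise writes $\bar{S}(3\cdot 2^k) = v*M_0^k*M_1*M_1*w$, reuses the minimal polynomial of $M_0$ from Theorem \ref{s2kthm}, solves a small linear system (using $\bar{S}(3\cdot 2^{3})$, $\bar{S}(3\cdot 2^{27})$, $\bar{S}(3\cdot 2^{51})$) for the leading constants, and tabulates the correction term for each residue of $k$ modulo $24$. Your value $c_1'=1$ is the one consistent with the stated formula (the paper's text reports $c_1=0$, apparently a typo, since its own display (\ref{xxx}) carries the $+1$).
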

\begin{proof}
From (\ref{linrep}),
$$
\bar{S}(3*2^k) = v*M_0^k*M_1*M_1 * w
$$
We can therefore use the same approach as in theorem \ref{s2kthm}. We have,
\begin{equation}
\label{s32kequn}
\bar{S}(3*2^k) = \sum_i c_i \lambda_i^k
\end{equation}
where $\{ c_j \}$ are constants and $\{ \lambda_j \}$ are the roots of the minimal polynomial of $M_0$, which we have already calculated. The constants $c_0$ and $c_1$ can be calculated using the values of $\bar{S}(3*2^3) = 3, \bar{S}(3*2^{27}) = 50327553$ and $\bar{S}(3*2^{51}) = 844424913354753$ to create three linear equations. The equations are:
\begin{equation*}
\begin{split}
2^3 * c_0 + c_1 + \tau =& 3 \\
2^{27} * c_0 + c_1 + 2^{12} * \tau =&50327553 \\
2^{51} * c_0 + c_1 + 2^{24} * \tau =& 844424913354753
\end{split}
\end{equation*}
where $\tau =  \sum_{\lambda_j \not = 1, 2 } c_j \lambda_j^{3}$. These can be solved to give $c_0 = 3/8$, $c_1 = 0$ and $\tau = -1$. As before, we have for $0 \leq s < 24$,
\begin{equation}
\label{xxx}
\begin{split}
\bar{S}(3*2^{24k + s}) =& \sum_j c_j \lambda_j^{24k + s} =  3/8*2^{24k + s} + 1 + \sum_{\lambda_j \not  1, 2 }  c_j \lambda_j^{24k + s} \\
= &3*2^{24k + s - 3} + 1 + 2^{12k} *  \sum_{\lambda_j \not = 1, 2 } c_j \lambda_j^{s}
\end{split}
\end{equation}
The formula for $\bar{S}(3*2^k)$ can be deduced once (\ref{s32kequn}) is used to calculate $\sum_{\lambda_j \not = 1, 2 } c_j \lambda_j^{s}$ for each $0 \leq s < 24$.
\end{proof}

\bigskip

\section{A bound for $\bar{S}(n)$}
\label{bound}

In this section we establish a best possible bound on $\bar{S}(n)$. We borrow an argument from the paper by Hajdu and Papp \cite{Hajda2018}. 

We introduce the following sets for any triple $(x, y, z)$ of binary digits:
\begin{equation}
\label{sxyzdef}
\begin{split}
\bar{S}_{(x,y,z)} :=& \{ n: \Theta(n) = (x, y, z) \}, \\
\bar{S}_{(x,y,z)}(n) :=& \{ r \leq n: \Theta(r) = (x, y, z) \}, \\
\bar{S}_{(x,y,z)}(m, n) :=& \{ m < r \leq n: \Theta(r) = (x, y, z) \}
\end{split}
\end{equation}
Then $\bar{S}_{(0, 0, 1)} = \bar{S}$ and $\bar{S}_{(0, 0, 1)}(n) = \bar{S}(n)$. 
\bigskip

Note that our set $\bar{S}_{(x,y,z) }$ is equivalent to the set  $H^{(\alpha, \beta)}$ used by Hajdu and Papp, where $\alpha = x$ and $\beta = 3^y(-1)^z$.

We note that the results from the previous section show that 
$$
\bar{S}(2^k) = 1/8*2^{k} +  \mathcal{O}(\sqrt{2^k}) \text{ \, \, and \, \, } \bar{S}(3*2^k) = 1/8(3*2^{k}) +  \mathcal{O}(\sqrt{2^k}) 
$$
The same bound can be shown to apply for other sets of numbers defined by $(\gamma, \alpha_3, \alpha_5 )$. In summary, as for the case when $(x, y, z ) = (0, 0, 1)$,  the set $\bar{S}_{(x,y,z)} (n)$ is accepted by an automaton constructed from the automata "gamma", "a3" and "a5" displayed earlier. For example, the set of $\{n: \Theta(n) = (1, 1, 1) \}$ is accepted by the automaton defined by the Walnut command:
\begin{verbatim}
def factauto111 "?lsd_2 ~$gamma(n) & ~$a3(n) & ~$a5(n)":
\end{verbatim}

As before, a linear representation of this automaton in terms of the matrices $M_0$ and $M_1$ and the vectors $v$ and $w$ comes from the command:
\begin{verbatim}
eval sumfact111 n "?lsd_2 (j>=1) & (j<=n) & $factauto111(j)":
\end{verbatim}

For all choices of $(x, y, z)$, the matrix $M_0$ that Walnut produces is the same as the matrix $M_0$ that was produced by the automaton "sumfact" that was used in analysing the $\Theta(n) = (0, 0, 1)$ case. All the previous analysis related to the case $\Theta(n) = (0, 0, 1)$ therefore applies to the other choices of $(x, y, z)$.
As a result, the same methods show that, for any $(x, y, z)$, 
\begin{equation}
\label{sbarxyz}
\bar{S}_{(x,y,z)}(2^k) = 1/8*2^{k} +  \mathcal{O}(\sqrt{2^k}) \text{ \, \, and \, \, } \bar{S}_{(x,y,z)}(3*2^k) = 1/8(3*2^{k}) +  \mathcal{O}(\sqrt{2^k}) .
\end{equation}
\bigskip

It then follows that, for any choice of $(x, y, z )$ and $k$,
\begin{equation}
\label{sbarsubreg}
\begin{split}
\bar{S}_{(x,y,z)}(2^k, 2^{k+1}) &= 1/8*2^k + \mathcal{O}(\sqrt{2^{k}}) \\
\bar{S}_{(x,y,z)}(2^{k+1}, 3*2^{k}) &= 1/8*2^k + \mathcal{O}(\sqrt{2^{k}}) \\
\bar{S}_{(x,y,z)}(3*2^{k-2}, 2^{k}) &= 1/8*2^k + \mathcal{O}(\sqrt{2^{k}}) .
\end{split}
\end{equation}

\bigskip

\begin{lemma}[Hajdu and Papp Lemma 3.1]
\label{hp3.1}
For any  integer $k$ with $k \geq 1$, we have $\gamma(t*2^k + i) = \gamma( i ) + \gamma( t*2^k ) \pmod{2}$ for $0 \leq i < 2^k$.
\end{lemma}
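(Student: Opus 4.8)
The plan is to reduce the statement to Legendre's (de Polignac's) formula for the $2$-adic valuation of a factorial. Recall that, writing $s_2(n) = \sum_{j \geq 0} a_j$ for the sum of the binary digits of $n$, one has the classical identity $\gamma(n) = \nu_2(n!) = n - s_2(n)$ (the formula $\nu_p(n!) = (n - s_p(n))/(p-1)$ specialised to $p = 2$). The whole argument then hinges on a single carry-free observation about binary addition, so there is essentially no computation to grind through.

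First I would observe that, since $0 \leq i < 2^k$, the binary expansion of $i$ is supported on the bit positions $0, 1, \dots, k-1$, whereas the binary expansion of $t \cdot 2^k$ is supported on positions $\geq k$. Hence adding $t \cdot 2^k$ and $i$ produces no carries: the binary expansion of $t \cdot 2^k + i$ is just the concatenation of the two expansions, and in particular
\[
s_2(t \cdot 2^k + i) = s_2(t \cdot 2^k) + s_2(i).
\]

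Next I would substitute this into Legendre's formula:
\[
\gamma(t \cdot 2^k + i) = (t \cdot 2^k + i) - s_2(t \cdot 2^k + i) = \bigl(t \cdot 2^k - s_2(t \cdot 2^k)\bigr) + \bigl(i - s_2(i)\bigr) = \gamma(t \cdot 2^k) + \gamma(i),
\]
which is in fact an exact identity over $\mathbb{Z}$; reducing it modulo $2$ yields the claimed congruence. The degenerate case $i = 0$ is trivial since $\gamma(0) = 0$, and $t = 0$ is likewise immediate.

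I do not expect any real obstacle here; the only point requiring care is to state the "no carries" claim cleanly and to be explicit that disjointness of the supporting bit positions forces additivity of $s_2$. An alternative route, more in the spirit of the rest of the paper, would be to verify the congruence directly in Walnut by encoding $\gamma(n) \bmod 2$ through the automaton "gamma" and quantifying over the relevant values of $n$, $t$, $i$ and $k$; but the Legendre-formula computation above is shorter, exact, and entirely self-contained, so that is the proof I would give.
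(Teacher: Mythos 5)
Your proof is correct. Note that the paper itself states this lemma without proof, simply citing it as Lemma 3.1 of Hajdu and Papp, so there is no in-paper argument to compare against; your derivation via Legendre's formula $\gamma(n) = \nu_2(n!) = n - s_2(n)$ together with the carry-free additivity $s_2(t\cdot 2^k + i) = s_2(t\cdot 2^k) + s_2(i)$ for $0 \leq i < 2^k$ is a clean, self-contained justification, and it in fact establishes the stronger exact identity $\gamma(t\cdot 2^k + i) = \gamma(t\cdot 2^k) + \gamma(i)$ over $\mathbb{Z}$, of which the stated congruence modulo $2$ is an immediate consequence.
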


\bigskip

For the following lemma, we divide the region $[0 , 2^s )$ into four equal sized subregions given by:
\begin{equation}
\label{subreg}
\begin{split}
I_1 :=& \{ i : 0 \leq i < 2^{s-2} \} \\
I_2 :=& \{ i: 2^{s-2} \leq i < 2^{s-1} \} \\
I_3 :=& \{ i: 2^{s-1} \leq i < 3*2^{s-2} \} \\
I_4 :=& \{ i: 3*2^{s-2} \leq i < 2^s \} .
\end{split}
\end{equation}

\bigskip
\begin{lemma}[Hajdu and Papp Lemma 3.3]
\label{hp3.3}
Let $k$ be an integer and $t$ an odd integer with $t \geq 1$. Then, for each $j \in \{1, 2, 3, 4 \}$,  there exist numbers $c(t, j) \in \{ 1, 3, 5, 7 \}$ such that
$$
3^{\alpha_3(t*2^k + i)} * (-1)^{\alpha_5(t*2^k + i)} = c(t, j) * 3^{\alpha_3 (i) } * (-1)^{\alpha_5 (i)} \pmod 8
$$
for all $i \in I_j$.
\end{lemma}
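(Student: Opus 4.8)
The plan is to write $n = t\cdot 2^k + i$ with $i \in I_j \subseteq [0,2^s)$ and to track how the two bit-pattern counts $\alpha_3,\alpha_5$ change when the binary digits of $t$ are prepended to those of $i$. I will assume $s\le k$ (the regime in which the lemma is applied), so that the digits of $i$ occupy positions $0,\dots,s-1$, positions $s,\dots,k-1$ of $n$ are zero, and the digits of $t$ occupy positions $\ge k$; in particular the digit of $n$ at position $k$ is the least significant digit of $t$, which equals $1$ because $t$ is odd. Write $b_0=1,b_1,b_2,\dots$ for the binary digits of $t$. Recall that $\alpha_3(n)$ counts the positions $m\ge 0$ at which the digit triple $(a_m,a_{m+1},a_{m+2})$ of $n$ equals $(1,1,0)$ or $(0,0,1)$, and $\alpha_5(n)$ counts those equal to $(1,0,1)$ or $(0,1,1)$.

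First I would establish the exact identities
$$\alpha_3(n)=\alpha_3(i)+\alpha_3(t)+\Delta_3(j,t),\qquad \alpha_5(n)=\alpha_5(i)+\alpha_5(t)+\Delta_5(j,t)$$
for integers $\Delta_3(j,t),\Delta_5(j,t)$ depending only on $j$ and on $t\bmod 4$. To do so, split the positions $m$ into three groups: (a) $0\le m\le k-3$; (b) the two interface positions $m=k-2$ and $m=k-1$; (c) $m\ge k$. For $m$ in group (a) the digit triple of $n$ at $(m,m+1,m+2)$ coincides with that of $i$, so group (a) contributes $\alpha_3(i)$ (resp.\ $\alpha_5(i)$) minus the amount that the positions $m\ge k-2$ contribute to $\alpha_3(i)$ itself; but that amount depends only on the top two digits of $i$ inside the block $I_j$ (for $\alpha_3$ it is $1$ exactly when $i\in I_4$ and $0$ otherwise, and for $\alpha_5$ it is always $0$). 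For $m$ in group (c), the triple of $n$ at $(m,m+1,m+2)$ equals the triple of $t$ at $(m-k,m-k+1,m-k+2)$, so group (c) contributes exactly $\alpha_3(t)$ (resp.\ $\alpha_5(t)$). Finally, the interface positions $k-2$ and $k-1$ involve only the digits of $n$ at $k-2,k-1$ (which are determined once $j$ is chosen) together with $b_0=1$ and $b_1$, so their joint contribution is a function of $j$ and $t\bmod 4$ alone. Adding the three contributions gives the displayed identities.

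Next I would substitute these identities into $3^{\alpha_3(n)}(-1)^{\alpha_5(n)}$, obtaining
$$3^{\alpha_3(n)}(-1)^{\alpha_5(n)}=\Bigl(3^{\alpha_3(t)+\Delta_3(j,t)}(-1)^{\alpha_5(t)+\Delta_5(j,t)}\Bigr)\cdot 3^{\alpha_3(i)}(-1)^{\alpha_5(i)},$$
and set $c(t,j)\equiv 3^{\alpha_3(t)+\Delta_3(j,t)}(-1)^{\alpha_5(t)+\Delta_5(j,t)}\pmod 8$. Since $3$ and $-1$ are units modulo $8$ with $3^2\equiv(-1)^2\equiv 1$, every such product is congruent to one of $1,-1,3,-3$ and so lies in $\{1,3,5,7\}$, which is exactly the assertion; note that $c(t,j)$ then depends only on $j$ and $t\bmod 4$.

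The hard part will be the bookkeeping in the middle step: one must check carefully that across the two interface positions no window is omitted or double counted, and in particular that the stray contribution that the top two digits of $i$ make to $\alpha_3(i)$ (the $I_4$ case) is exactly cancelled when one passes from $\alpha_3(i)$ to the group-(a) sum. This reduces to a short finite verification over the four possibilities for the digit pair $(a_{k-2},a_{k-1})$ and the two values of $b_1$, together with the easy sub-case $s<k$, in which positions $k-2$ and $k-1$ of $n$ both vanish and $c(t,j)$ turns out to be independent of $j$. Structurally this is the same separation-of-blocks device that underlies the mod-$2$ additivity of $\gamma$ in Lemma \ref{hp3.1}.
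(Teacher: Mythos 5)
The paper imports this lemma from Hajdu and Papp without reproducing a proof, so there is no in-paper argument to compare against; judged on its own, your reconstruction is correct and supplies the missing details. The three-way split of the sliding windows is exactly the right bookkeeping: for $m \le k-3$ the window of $n$ coincides with that of $i$; for $m \ge k$ it coincides with the window of $t$ at $m-k$, giving exactly $\alpha_3(t)$ and $\alpha_5(t)$; and the two interface windows at $m = k-2, k-1$ depend only on the top two bits of $i$ (hence on $j$) together with $b_0=1$ and $b_1$. Your observation that the only discrepancy between $\alpha_3(i)$ and the group-(a) count is the window of $i$ at $m=k-2$, which fires precisely when both top bits are $1$ (i.e.\ $i \in I_4$) and never contributes to $\alpha_5$, closes the one place where an omission or double count could hide. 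Two minor points. First, the ``finite verification'' you defer to the end is not actually needed for the statement as given: once $\alpha_3(n)-\alpha_3(i)$ and $\alpha_5(n)-\alpha_5(i)$ are shown to depend only on $j$ and $t$, the existence of $c(t,j)\in\{1,3,5,7\}$ is immediate because $3^{a}(-1)^{b}$ is always an odd residue mod $8$; the explicit values of $\Delta_3,\Delta_5$ are irrelevant. Second, your aside that for $s<k$ ``positions $k-2$ and $k-1$ of $n$ both vanish'' is only literally true for $s\le k-2$; when $s=k-1$ the bit at position $k-2$ is still the top bit of $i$, so $c(t,j)$ can retain a genuine dependence on $j$ --- but your general argument already covers this case, so nothing breaks. (The residual notational mismatch between the $s$ used to define the $I_j$ and the $k$ in the lemma is the paper's, not yours.)
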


\bigskip

\begin{lemma}
\label{orderlem}
For integers $r, s$ and $t$ satisfying  $0 \leq r < s$, $0 \leq t < 2^s$, $2 \not | t $, 
$$
\bar{S}(t*2^{s}, \, \, t*2^{s} + 2^{r}) = 1/8 * 2^{r} +  \mathcal{O}(\sqrt{2^{r}})
$$
\end{lemma}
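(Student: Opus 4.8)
The plan is to use Lemmas~\ref{hp3.1} and~\ref{hp3.3} to convert the count $\bar{S}(t\cdot 2^{s},\,t\cdot 2^{s}+2^{r})$ into a count of integers $i\le 2^{r}$ whose triple $\Theta(i)$ equals some fixed value, and then to read off the order of magnitude from~(\ref{sbarxyz}) and~(\ref{sbarsubreg}). First I would record the standing hypotheses: $t$ is odd with $t\ge 1$, and $s\ge 1$ since $s>r\ge 0$, so both lemmas apply with $k=s$; moreover $r<s$ gives $2^{r}\le 2^{s-1}<2^{s}$, so every $i$ with $0<i\le 2^{r}$ lies in the range $[0,2^{s})$ on which the lemmas operate. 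Writing $g:=\gamma(t\cdot 2^{s})\bmod 2$, Lemma~\ref{hp3.1} shows that $\gamma(t\cdot 2^{s}+i)$ is even exactly when $\gamma(i)\equiv g\pmod 2$, that is, when $\Theta_{1}(i)=g$.

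Next I would recall, from Theorem~\ref{mainthm} and the corollary following it, that $n\in\bar{S}$ if and only if $\gamma(n)$ is even and $3^{\alpha_3(n)}(-1)^{\alpha_5(n)}\equiv 7\pmod 8$, and that the assignment $(\alpha_3\bmod 2,\ \alpha_5\bmod 2)\mapsto 3^{\alpha_3}(-1)^{\alpha_5}\bmod 8$ is a bijection from $(\mathbb{Z}/2\mathbb{Z})^{2}$ onto $\{1,3,5,7\}$. Fix $j\in\{1,2,3,4\}$ and let $c(t,j)\in\{1,3,5,7\}$ be the constant from Lemma~\ref{hp3.3}. Since $c(t,j)$ is a unit modulo $8$, multiplication by it permutes $\{1,3,5,7\}$, so for $i\in I_{j}$ the condition $3^{\alpha_3(t\cdot 2^{s}+i)}(-1)^{\alpha_5(t\cdot 2^{s}+i)}\equiv 7\pmod 8$ is equivalent to $3^{\alpha_3(i)}(-1)^{\alpha_5(i)}\equiv 7\,c(t,j)^{-1}\pmod 8$, which by the bijection above pins $(\alpha_3(i)\bmod 2,\ \alpha_5(i)\bmod 2)$ down to one specific pair $(y_{j},z_{j})$. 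Combining this with the $\gamma$-condition, for $i\in I_{j}$ we obtain $t\cdot 2^{s}+i\in\bar{S}$ if and only if $\Theta(i)=(g,y_{j},z_{j})=:\theta_{j}$.

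Finally I would split the range $\{\,i:0<i\le 2^{r}\,\}$ across the quarters $I_{1},I_{2}$ of $[0,2^{s})$ from~(\ref{subreg}). If $r\le s-2$, the whole range, apart from at most the single endpoint $i=2^{r}$, lies in $I_{1}$, so $\bar{S}(t\cdot 2^{s},\,t\cdot 2^{s}+2^{r})=\bar{S}_{\theta_{1}}(2^{r})+\mathcal{O}(1)$, and~(\ref{sbarxyz}) gives $\tfrac18\cdot 2^{r}+\mathcal{O}(\sqrt{2^{r}})$. If $r=s-1$, the range is $I_{1}\cup I_{2}$ apart from the single point $i=2^{s-1}$, so
$$
\bar{S}(t\cdot 2^{s},\,t\cdot 2^{s}+2^{r})=\bar{S}_{\theta_{1}}(2^{s-2})+\bar{S}_{\theta_{2}}(2^{s-2},2^{s-1})+\mathcal{O}(1);
$$
here~(\ref{sbarxyz}) handles the first summand and the first line of~(\ref{sbarsubreg}) with $k=s-2$ handles the second, and the two contributions $\tfrac18\cdot 2^{s-2}$ sum to $\tfrac18\cdot 2^{s-1}=\tfrac18\cdot 2^{r}$ with error $\mathcal{O}(\sqrt{2^{s-2}})=\mathcal{O}(\sqrt{2^{r}})$. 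The implied constants are uniform since only finitely many triples $(x,y,z)$ occur.

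The main obstacle I anticipate is purely organisational: ensuring that Lemmas~\ref{hp3.1} and~\ref{hp3.3} are invoked with the correct exponent $k=s$, and treating the dyadic boundary case $r=s-1$, where the interval crosses from $I_{1}$ into $I_{2}$ and one must appeal to~(\ref{sbarsubreg}) as well as~(\ref{sbarxyz}). The algebraic heart of the argument---that multiplying by a unit modulo $8$ merely relabels which $\Theta$-class must be counted---is immediate once the bijection between parity pairs and residues modulo $8$ is made explicit.
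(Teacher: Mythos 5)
Your argument is correct and follows essentially the same route as the paper: Lemmas \ref{hp3.1} and \ref{hp3.3} turn membership of $t\cdot 2^{s}+i$ in $\bar{S}$ into a fixed $\Theta$-class condition on $i$ over each quarter, and the count is then read off from (\ref{sbarxyz}) and (\ref{sbarsubreg}). The only (cosmetic) difference is the bookkeeping of quarters: the paper splits $[0,2^{r})$ into its own four quarters and uses all three lines of (\ref{sbarsubreg}), whereas you split according to the quarters of $[0,2^{s})$ that Lemma \ref{hp3.3} literally provides, observing that $2^{r}\le 2^{s-1}$ forces the range into $I_{1}\cup I_{2}$ only --- which is, if anything, the more faithful reading of that lemma.
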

\begin{proof}
\begin{equation*}
\begin{split}
\bar{S}(t*2^{s}, \, \, t*2^{s} + 2^{r}) &= \# \{ i : 0 \leq i < 2^r, \Theta (t*2^{s} + i ) = (0, 0, 1) \} \\
& = \sum_{j = 1}^{4} \{ i \in I_j : \Theta (t*2^{s} + i ) = (0, 0, 1) \} 
\end{split}
\end{equation*}
where $I_j$ are the regions defined in ( \ref{subreg} ). From lemma \ref{hp3.1}, if $0 \leq i < 2^s$, 
$$
\Theta_1 (t*2^{s} + i ) = 0 \iff \Theta_1 ( i ) = - \Theta_1 (t*2^s)
$$
From lemma \ref{hp3.3}, if $i \in I_j$, 
\begin{equation*}
\begin{split}
(\Theta_2 (t*2^{s} + i ), \, \, \,& \Theta_3 (t*2^{s} + i) )  = (0, 1) \\
& \iff c(t, j) * 3^{\alpha_3 (i) } * (-1)^{\alpha_5 (i)} \equiv - 1 \pmod 8 \\
& \iff (\Theta_2 (i) , \Theta_3 (i) ) = ( d_2 (t, j), d_3 (t, j))
\end{split}
\end{equation*}
for some $d_1, d_2 \in \{0, 1 \}$. Hence,
\begin{equation*}
\begin{split}
\bar{S}(t*2^{s}, \, \, t*2^{s} + 2^{r}) = &\sum_{j=1}^{4} \# \{ i \in I_j : \Theta (i) = (- \Theta_1 (t*2^k), d_2 (t, j), d_3 (t, j)) \} \\
= & \, \, \bar{S}_{(x, y, z)_1} (2^{r-2}) + \bar{S}_{(x, y, z)_2 }(2^{r-2}, 2^{r-1}) \, \, + \\
& + \bar{S}_{( x, y, z )_3} (2^{r-1}, 3*2^{r-2}) + \bar{S}_{(x, y, z)_4} (3*2^{r-2}, 2^r) \\
= & \, \,4*(1/8 * 2^{r-2} +  \mathcal{O}(\sqrt{2^{r}}) ) \\
= & \, \, 1/8*2^r +  \mathcal{O}(\sqrt{2^{r}}) .
\end{split}
\end{equation*}
where $(x, y, z)_j = (- \Theta_1 (t*2^k), d_2 (t, j), d_3 (t, j))$ for $j \in \{1, 2, 3, 4 \}$ and we have used (\ref{sbarsubreg}).
\end{proof}

\bigskip

The argument in the following theorem comes from theorem 2.3 in the paper by Hajdu and Papp.

\bigskip

\begin{theorem}
$\bar{S} (n) = 1/8 * n + \mathcal{O}( \sqrt{n})$
\end{theorem}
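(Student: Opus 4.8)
The plan is to reduce $\bar{S}(n)$ to a sum of contributions over dyadic blocks on which Lemma \ref{orderlem} and Theorem \ref{s2kthm} apply directly, discarding a short tail whose length is already $\mathcal{O}(\sqrt{n})$. Fix $k$ with $2^k \le n < 2^{k+1}$ and write the binary expansion $n = \sum_{i=1}^{\ell} 2^{a_i}$ with $k = a_1 > a_2 > \cdots > a_\ell \ge 0$. Let $m$ be the largest index for which $a_m \ge \lceil (k+1)/2 \rceil$, put $P_i := \sum_{j < i} 2^{a_j}$ for $1 \le i \le m+1$ (so $P_1 = 0$, $P_{i+1} = P_i + 2^{a_i}$, $P_{m+1} \le n$), and record the decomposition
$$
\bar{S}(n) \;=\; \sum_{i=1}^{m} \bar{S}\big( P_i,\, P_i + 2^{a_i} \big) \;+\; \bar{S}\big( P_{m+1},\, n \big),
$$
which is just additivity of the counting function over the partition of $(0,n]$ into the intervals $(P_i, P_{i+1}]$ and $(P_{m+1}, n]$.

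Then I would estimate each piece. For the first block, $\bar{S}(P_1, P_1 + 2^{a_1}) = \bar{S}(2^k) = 2^{k-3} + \mathcal{O}(2^{k/2})$ by Theorem \ref{s2kthm}. For $2 \le i \le m$, the integer $P_i$ is divisible by $2^{a_{i-1}}$ and $t := P_i / 2^{a_{i-1}} = \sum_{j<i} 2^{a_j - a_{i-1}}$ is odd with $t < 2^{a_1 - a_{i-1} + 1} \le 2^{a_{i-1}}$, the final inequality holding because $a_{i-1} \ge a_m \ge \lceil (k+1)/2\rceil$; hence, with $s := a_{i-1}$ and $r := a_i < s$, Lemma \ref{orderlem} gives $\bar{S}(P_i, P_i + 2^{a_i}) = \tfrac18 \cdot 2^{a_i} + \mathcal{O}(2^{a_i/2})$. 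For the tail, maximality of $m$ gives $n - P_{m+1} = \sum_{j > m} 2^{a_j} < 2^{\lceil (k+1)/2 \rceil} = \mathcal{O}(\sqrt{n})$, so $\bar{S}(P_{m+1}, n) = \mathcal{O}(\sqrt{n})$ and the omitted main term $\tfrac18(n - P_{m+1})$ is also $\mathcal{O}(\sqrt{n})$.

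Summing, the error terms contribute $\sum_{i=1}^m \mathcal{O}(2^{a_i/2}) = \mathcal{O}(2^{a_1/2}) = \mathcal{O}(\sqrt{n})$, since $a_1 > a_2 > \cdots$ forces the geometric estimate $\sum_i 2^{a_i/2} \le 2^{a_1/2}(1 - 2^{-1/2})^{-1}$, and the main terms contribute $\tfrac18 \sum_{i=1}^m 2^{a_i} = \tfrac18 P_{m+1} = \tfrac18 n - \tfrac18(n - P_{m+1}) = \tfrac18 n + \mathcal{O}(\sqrt{n})$; combining these with the tail estimate yields $\bar{S}(n) = \tfrac{n}{8} + \mathcal{O}(\sqrt{n})$. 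The only delicate point — and the reason for stopping the peeling at level $\lceil (k+1)/2 \rceil$ rather than exhausting all bits of $n$ — is the hypothesis $t < 2^s$ in Lemma \ref{orderlem}: a bit $a_i$ of $n$ lying far below the next higher bit $a_{i-1}$ produces a block whose base point, divided by $2^{a_{i-1}}$, is too large for the lemma, so such blocks must be swept into the tail, which is harmless precisely because the tail has length $\mathcal{O}(\sqrt{n})$. The rest is the routine bookkeeping indicated above together with the geometric-series bound on the accumulated error.
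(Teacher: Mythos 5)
Your argument is correct and follows the same basic strategy as the paper's proof: decompose $(0,n]$ along the binary expansion of $n$ and apply Lemma \ref{orderlem} to each dyadic block. The one genuine difference is your truncation of the peeling at bit level $\lceil (k+1)/2\rceil$, and it is a substantive improvement. The paper peels off \emph{every} bit of $n$ and asserts that each block has the form $\bar{S}(t\cdot 2^{f_l},\, t\cdot 2^{f_l}+2^{f_{l+1}})$ with ``$t<2^{f_l}$ odd''; but here $t=2^{f_1-f_l}+\cdots+1$, so the inequality $t<2^{f_l}$ fails whenever $f_l\le f_1/2$, i.e.\ for the low-order bits of $n$, and the hypothesis $0\le t<2^s$ of Lemma \ref{orderlem} as stated is then violated. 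Your device of sweeping all bits below $\lceil (k+1)/2\rceil$ into a tail of length $\mathcal{O}(\sqrt{n})$, estimated trivially, repairs this at no cost to the final bound. (Alternatively one can note that the proof of Lemma \ref{orderlem} never actually uses $t<2^s$ --- Lemmas \ref{hp3.1} and \ref{hp3.3} hold for all odd $t\ge 1$ --- so that hypothesis could simply be dropped; but as the lemma is stated, your version is the one that is fully justified.) The remaining bookkeeping --- the geometric summation of the $\mathcal{O}(2^{a_i/2})$ error terms and the handling of the leading block via Theorem \ref{s2kthm} --- matches the paper.
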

\begin{proof}
We first note that (\ref{sbarxyz}) implies that for any $(x, y, z)$,
\begin{equation}
\label{sbarxyxint}
\begin{split}
\bar{S}_{(x,y,z)} (2^{k-2}, 2^{k-1}) =& 1/8*2^{k-2} +  \mathcal{O}(\sqrt{2^k}) \\
\bar{S}_{(x,y,z)} (2^{k -1}, 3*2^{k - 2}) =& 1/8*2^{k -2} +  \mathcal{O}(\sqrt{2^k}) \\
\bar{S}_{(x,y,z)} (3*2^{k -2}, 2^{k}) =& 1/8*2^{k - 2} +  \mathcal{O}(\sqrt{2^k}) .
\end{split}
\end{equation}

Let $n$ be an integer with binary representation $n = \sum_1^j 2^{f_i}$, where $f_1 >  \dots > f_j \geq 0$. Then,
$$
\bar{S}(n) = \bar{S}(2^{f_1}) + \bar{S}(2^{f_1}, 2^{f_1} + 2^{f_2}) + \dots + \bar{S}(2^{f_1} + \dots + 2^{f_{j-1}}, 2^{f_1} + \dots + 2^{f_j})
$$
Any of the terms above can be written as
$$
\bar{S}(2^{f_1} + \dots + 2^{f_{l-1}}, \, \, 2^{f_1} + \dots + 2^{f_j}) = \bar{S}(t*2^{f_l}, \, \, t*2^{f_l} + 2^{f_{l+1}})
$$
where $t < 2^{f_l}$ is odd. Since,
$$
 \bar{S}(t*2^{f_l}, \, \, t*2^{f_l} + 2^{f_{l+1}}) = 1/8 * 2^{f_{l+1}} +  \mathcal{O}(\sqrt{2}^{f_l + 1})
$$
by lemma \ref{orderlem}, we have
\begin{equation*}
\begin{split}
\bar{S}(n) =& \sum_{l=1}^j 1/8 * 2^{f_l} +  \mathcal{O}(\sqrt{2}^{f_l} ) \\
                =& 1/8 * n + \mathcal{O} (( \sqrt2^{f_1 + 1} - 1)/ (\sqrt{2} - 1) ) \\
              = & 1/8 * n +  \mathcal{O} ( \sqrt{2}^{f_1} ) .
\end{split}
\end{equation*}
Since $f_1 \leq \log_2 (n)$ we have
$$
\bar{S} (n) = 1/8 * n + \mathcal{O}(\sqrt{n}) .
$$
\end{proof}
\bigskip
\bibliographystyle{plain}
\begin{small}
\bibliography{Factorial}
\end{small}

\end{document}